\newcommand{\mms}{m.m.s.}
\newcommand{\R}{\mathbb{R}}
\newcommand{\N}{\mathbb{N}}
\newcommand{\supp}{\text{\rm supp}}
\newcommand{\diam}{{\rm{diam\,}}}
\newcommand{\ve}{\varepsilon}
\renewcommand{\L}{\mathcal{L}}
\newcommand{\CD}{\mathsf{CD}}
\newcommand{\Geo}{{\rm Geo}}
\newcommand{\MCP}{\mathsf{MCP}}
\newcommand{\I}{\mathcal{I}}
\renewcommand{\L}{\mathcal{L}}
\newcommand{\vol}{\mathit{Vol}}
\renewcommand{\P}{\mathbb P}
\renewcommand{\P}{\mathcal{P}}
\renewcommand{\H}{\mathcal{H}}
\newcommand{\mm}{\mathfrak m}
\newcommand{\qq}{\mathfrak q}
\newcommand{\ee}{{\rm e}}
\newcommand{\sfd}{\mathsf d}
\newcommand{\Opt}{\mathrm{OptGeo}}
\newcommand{\AVR}{\mathsf{AVR}}
\theoremstyle{plain}
\newtheorem{lemma}{Lemma}[section]
\newtheorem{theorem}[lemma]{Theorem}
\newtheorem{proposition}[lemma]{Proposition}
\newtheorem*{theorem*}{Theorem}
\newtheorem*{maintheorem*}{Main Theorem}
\theoremstyle{definition}
\newtheorem{definition}[lemma]{Definition}
\newtheorem*{definition*}{Definition}
\newtheorem*{remark*}{Remark}
\numberwithin{equation}{section}
\title{Isoperimetric inequality in noncompact $\MCP$ spaces}
\author{Fabio Cavalletti\thanks{F. Cavalletti: Mathematics Area, SISSA, Trieste (Italy), email:cavallet@sissa.it. } \ and Davide Manini\thanks{D. Manini: Mathematics Area, SISSA, Trieste (Italy), email:dmanini@sissa.it.}}
\date{}     
\begin{document}
\maketitle

\begin{abstract}
We prove a sharp isoperimetric inequality 
for the class of metric measure spaces verifying the 
synthetic Ricci curvature lower bounds $\MCP(0,N)$ 
and having Euclidean volume growth at infinity. 
We avoid the classical use of the Brunn-Minkowski inequality,  
not available for $\MCP(0,N)$, and of the PDE approach, not available in the singular setting. Our approach will be carried over by using a scaling limit of localization.
\end{abstract}

\bibliographystyle{plain}


\section{Introduction}

The scope of this short note is to present an isoperimetric inequality 
for the class of metric measure spaces (or \mms\ for short)
having non-negative Ricci curvature and dimension bounded from above
encoded in the synthetic condition  
called Measure-Contraction property, for short $\MCP(0,N)$, where $N$ 
mimics the upper bound on the dimension.

The main motivation comes from the recent interest in functional and geometric inequalities in non-compact Riemannian manifolds having non-negative Ricci curvature. 
To mention the main contributions we list 
\cite{Brendle21,Brendle21b, Johne}
and \cite{AgFoMaz20, FogagnoloMazzieri}.

However the most general result concerning the isoperimetric inequality 
in the non-negative Ricci curvature setting is the one obtained in \cite{BaloghKristal:isoperimetric}. 
To properly report it, we first briefly introduce few notations. 
Let $(X,\sfd,\mm)$ be a metric-measure space (or \mms), meaning that $(X,\sfd)$ is a 
complete and separable metric space and $\mm$ a Radon, non-negative measure over it.   
For any real number $N > 1$ one can consider the family of those \mms's verifying 
the so-called Curvature-Dimension condition $\CD(0,N)$; this is 
 the synthetic condition introduced in the seminal papers \cite{lottvillani:metric,sturm:I,sturm:II} by Lott, Sturm and Villani. 
 We refer to these references for its definition; for the scope of this note 
 we only report that the $\CD(0,N)$ condition replaces in the non-smooth setting the two conditions $Ric \geq 0$ and the $\dim \leq N$.

Letting $B_{r}(x)=\{ y \in X \colon \sfd(x,y) < r\}$
denoting the metric ball with center $x\in X$
and radius $r> 0$, by Bishop-Gromov volume growth inequality, 
see \cite[Theorem 2.3]{sturm:II}, the map $r \mapsto \frac{\mm(B_{r}(x))}{r^{N}}$ 
is non-increasing over $(0,\infty)$  for any $x \in X$.
The \emph{asymptotic volume ratio} is then naturally defined by 
$$
\mathsf{AVR}_{(X,\sfd,\mm)} = \lim_{r\to\infty} \frac{\mm(B_{r}(x))}{\omega_{N} r^{N}}.
$$
It is easy to see that it is indeed independent of the choice of $x \in X$;
the constant $\omega_{N}$ is the volume of the Euclidean unit ball in $\R^{N}$ 
whenever $N \in\N$ and it is extended to real values of $N$ via the 
$\Gamma$ function. When $\AVR_{(X,\sfd,\mm)} > 0$, we say that $(X,\sfd,\mm)$ has Euclidean 
volume growth. 
Whenever no ambiguity is possible, we will prefer the shorter notation $\AVR_{X}$.
Even though in the notation  $\AVR_{(X,\sfd,\mm)}$ the dimension upper bound $N$ 
does not appear, its dependence on the asymptotic volume ratio has to be noticed.

The following sharp isoperimetric inequality has been obtained in \cite{BaloghKristal:isoperimetric}.

\begin{theorem}\label{T:BalKri}\cite[Theorem 1.1]{BaloghKristal:isoperimetric}
Let $(X,\sfd,\mm)$ be a \mms\ satisfying the $\CD(0,N)$ condition for some $N > 1$, 
and having Euclidean volume growth. 
Then for every bounded Borel subset $E \subset X$ it holds
\begin{equation}\label{E:inequalityBalKri}
\mm^{+}(E) \geq N \omega_{N}^{\frac{1}{N}} \AVR_{(X,\sfd,\mm)}^{\frac{1}{N}} 
\mm(E)^{\frac{N-1}{N}},
\end{equation}
where $\mm^{+}(E)$ denotes the outer Minkowski content of $E$.
Moreover, inequality \eqref{E:inequalityBalKri} is sharp.
\end{theorem}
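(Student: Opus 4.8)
The plan is to derive \eqref{E:inequalityBalKri} from the Brunn--Minkowski inequality $\BM(0,N)$, a well-known consequence of $\CD(0,N)$ (see e.g.\ \cite{sturm:II}): for Borel sets $A_0,A_1\subset X$ of positive measure and $t\in(0,1)$,
\[
\mm\big(Z_t(A_0,A_1)\big)^{1/N}\ \geq\ (1-t)\,\mm(A_0)^{1/N}+t\,\mm(A_1)^{1/N},
\]
where $Z_t(A_0,A_1):=\{\gamma_t:\gamma\in\Geo(X),\ \gamma_0\in A_0,\ \gamma_1\in A_1\}$ is the set of $t$-intermediate points. Writing $E^r:=\{x\in X:\sfd(x,E)<r\}$ and recalling $\mm^+(E)=\liminf_{r\downarrow0}\big(\mm(E^r)-\mm(E)\big)/r$, the whole argument reduces to the enlargement estimate
\[
\mm(E^r)^{1/N}\ \geq\ \mm(E)^{1/N}+r\,(\AVR_X\,\omega_N)^{1/N},\qquad r>0:
\]
raising to the power $N$ and using $(a+b)^N\geq a^N+Na^{N-1}b$ for $a,b\geq0$ gives $\big(\mm(E^r)-\mm(E)\big)/r\geq N\,(\AVR_X\,\omega_N)^{1/N}\mm(E)^{(N-1)/N}$ for every $r>0$, and letting $r\downarrow0$ yields exactly \eqref{E:inequalityBalKri}. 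When $\mm(E)=0$ the right-hand side of \eqref{E:inequalityBalKri} vanishes, so we may assume $\mm(E)>0$.

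To prove the enlargement estimate I would interpolate $E$ against a huge metric ball. Fix $x_0\in X$ and $R_0>0$ with $E\subset \bar B_{R_0}(x_0)$, fix $r>0$ and $\delta\in(0,r)$, and for $\rho>R_0$ put $t_\rho:=(r-\delta)/(R_0+\rho)\in(0,1)$. If $w=\gamma_{t_\rho}\in Z_{t_\rho}(E,B_\rho(x_0))$ with $\gamma_0\in E$ and $\gamma_1\in B_\rho(x_0)$, then
\[
\sfd(w,E)\ \leq\ \sfd(\gamma_{t_\rho},\gamma_0)\ =\ t_\rho\,\sfd(\gamma_0,\gamma_1)\ \leq\ t_\rho\,(R_0+\rho)\ =\ r-\delta\ <\ r,
\]
so $Z_{t_\rho}(E,B_\rho(x_0))\subset E^r$. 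Combining this inclusion with $\BM(0,N)$ and with the Bishop--Gromov bound $\mm(B_\rho(x_0))\geq \AVR_X\,\omega_N\,\rho^N$ (immediate from the monotonicity of $r\mapsto\mm(B_r(x_0))/r^N$ recalled above, and the definition of $\AVR_X$),
\[
\mm(E^r)^{1/N}\ \geq\ (1-t_\rho)\,\mm(E)^{1/N}+t_\rho\,\rho\,(\AVR_X\,\omega_N)^{1/N}.
\]
Since $t_\rho\to0$ and $t_\rho\,\rho\to r-\delta$ as $\rho\to\infty$, letting first $\rho\to\infty$ and then $\delta\downarrow0$ gives the enlargement estimate, and with it the inequality.

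The step I expect to be the only genuinely technical point is the measurability needed to apply $\BM(0,N)$ legitimately, since $Z_{t_\rho}(E,B_\rho(x_0))$ need not be Borel. I would handle it by inner approximation: for compact $K_0\subset E$ and $K_1\subset B_\rho(x_0)$ the set $\{\gamma\in\Geo(X):\gamma_0\in K_0,\ \gamma_1\in K_1\}$ is compact ($X$ is proper, a consequence of Bishop--Gromov doubling, and one applies Arzel\`a--Ascoli), hence so is its image $Z_{t_\rho}(K_0,K_1)$; one runs the displayed chain with $K_0,K_1$ in place of $E,B_\rho(x_0)$, and then uses inner regularity of $\mm$ to let $\mm(K_0)\uparrow\mm(E)$ and $\mm(K_1)\uparrow\mm(B_\rho(x_0))$. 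The inclusion $Z_{t_\rho}(K_0,K_1)\subset E^r$ persists since $K_0\subset E$.

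Finally, sharpness is classical and is exhibited on Euclidean metric cones: equality holds for metric balls in $\R^N$ (where $\AVR=1$), and, more generally, on the cone over a round sphere of radius $k<1$ --- a $\CD(0,N)$ space with $\AVR<1$ --- the balls centred at the apex turn every inequality above into an equality, so \eqref{E:inequalityBalKri} cannot be improved. I should stress that this Brunn--Minkowski route is exactly what is \emph{not} available under the weaker hypothesis $\MCP(0,N)$ studied in the present note, which is why we shall instead argue through a scaling limit of the localization paradigm.
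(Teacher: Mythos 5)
Your argument is correct, and it follows exactly the route the paper indicates for this statement: Theorem \ref{T:BalKri} is quoted from \cite{BaloghKristal:isoperimetric} without proof, with the remark that it ``follows from the refined Brunn--Minkowski inequality given by optimal transport,'' which is precisely your scheme of interpolating $E$ with large balls via $\BM(0,N)$, using Bishop--Gromov to bound $\mm(B_\rho)$ from below, and passing to the Minkowski content. The compact inner approximation handling measurability of $Z_t$ and the sharpness discussion via Euclidean cones are both sound, so no gaps to report.
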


The proof of Theorem \ref{T:BalKri} follows from 
the refined Brunn-Minkowski inequality given by optimal transport.

More challenging to prove, \eqref{E:inequalityBalKri} also enjoys rigidity properties once restricted to the smooth world. 
If $(M,g)$ is a noncompact, complete $n$-dimensional 
Riemannian manifold having nonnegative Ricci curvature, 
one can consider $(M,d_{g},\vol_{g})$ as a metric measure space, where $d_{g}$ and 
$\vol_{g}$ denote the natural metric and canonical measure on $(M,g)$, respectively, 
and $(M,d_{g},\vol_{g})$ verifies the $\CD(0,n)$ condition.
The asymptotic volume ratio of $(M,g)$ is then given by 
$\AVR_{g} := \AVR_{(M,d_{g} ,\vol_{g})}$. 
By the Bishop-Gromov theorem one has that $\AVR_{g} \leq 1$ with
$\AVR_{g} = 1$ if and only if $(M, g)$ is isometric to the usual Euclidean space 
$\R^{n}$ endowed with the Euclidean metric $g_{0}$.

The rigidity result proved in  \cite[Theorem 1.2]{BaloghKristal:isoperimetric}
states that the equality holds in \eqref{E:inequalityBalKri} 
for some $E\subset M$ with $C^{1}$ smooth regular boundary and $M$ smooth manifold 
if and only if 
$\AVR_{g} = 1$ and $E$ is isometric to a ball $B \subset \R^{n}$.

Our scope is to extend the validity of Theorem 
\ref{T:BalKri} to a wider class of spaces. 
In particular our interest is in the class of non-smooth 
\mms\ verifying the Measure-Contraction property $\MCP(0,N)$, 
a synthetic curvature-dimension condition strictly weaker  than the $\CD(0,N)$ condition. The $\MCP$ condition has been introduced independently in the two contributions \cite{Ohta1} and \cite{sturm:II} and it implies the Bishop-Gromov 
volume comparison theorem, making the asymptotic volume ratio meaningful also for this class of spaces. 
Being a weaker condition, 
the $\MCP(0,N)$ has been verified to hold true for a large family of sub-Riemannian spaces for which no curvature-dimension conditions hold true (see \cite{BarRiz19}).
A sharp isoperimetric inequality, to the best of our knowledge, is completely open in the sub-Riemannian setting; even for the Heisenberg group no sharp isoperimetric inequality is known.  
This furnishes our main motivation to extend Theorem \ref{T:BalKri}
to $\MCP$ spaces.

\smallskip
From the technical point of view, the main issue with the isoperimetric inequality in 
the non-compact $\MCP$ setting is the absence of a classical Brunn-Minkowski inequality, crucially used in some recent contributions (see for instance \cite{BaloghKristal:isoperimetric}).  
It has to be underlined that recently a \emph{modified} Brunn-Minkowski inequality 
has been established in \cite{BaloghKristal:Heisenberg, BarRiz19} 
for a large family of sub-Riemannian manifolds also verifying  the $\MCP(0,N)$
condition, for an appropriate choice of $N>1$. 
Due to the nonlinearity of the concavity interpolation coefficients, 
this  modified version of 
Brunn-Minkowski inequality seems to not directly imply a non-trivial isoperimetric inequality. 
Also the weaker versions of the 
Brunn-Minkowski inequality obtained in \cite{MilQCD} (see Section 7), verified 
again by a large family of  sub-Riemannian spaces,
are just not tailored to obtain an expansion of the volume of a tubular neighbourhood 
of a given set.

\smallskip
We will solve this problem relying on an asymptotic use of 
the localization paradigm. The price we have to pay to use a 
more sophisticated approach compared 
to the direct use of the Brunn-Minkowski inequality, 
is the essentially non-branching assumption, 
a classical hypothesis that will be discussed in Section \ref{Ss:local}. 
The following one is the main result of this note.

\begin{theorem}\label{T:main1}
Let $(X,\sfd,\mm)$ be an essentially non-branching \mms\ satisfying the $\MCP(0,N)$ condition for some $N > 1$, 
and having Euclidean volume growth. 
Then for every Borel subset $E \subset X$ with $\mm(E)<\infty$, it holds
\begin{equation}\label{E:inequality}
\mm^{+}(E) \geq  (N\omega_{N} \AVR_{(X,\sfd,\mm)})^{\frac{1}{N}} 
\mm(E)^{\frac{N-1}{N}}.
\end{equation}
Moreover, inequality \eqref{E:inequality} is sharp.
\end{theorem}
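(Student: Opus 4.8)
\smallskip
\noindent\emph{Proof strategy.} The plan is to implement the scaling limit of localization announced in the abstract: run the $L^{1}$-localization (needle decomposition) relative to a family of Kantorovich potentials indexed by a scale $R$, prove a one-dimensional isoperimetric estimate along each needle, and send $R\to\infty$; the Euclidean volume growth enters \emph{only} through $\mm(B_{R}(x_{0}))/R^{N}\to\omega_{N}\AVR_{(X,\sfd,\mm)}$. A preliminary step reduces matters to bounded $E$ with $0<\mm(E)<\infty$: by the coarea inequality for the $1$-Lipschitz function $x\mapsto\sfd(x,x_{0})$ together with $\mm(E)<\infty$, one finds radii $r_{k}\uparrow\infty$ with $\mm^{+}(E\cap B_{r_{k}}(x_{0}))\le\mm^{+}(E)+o(1)$ and $\mm(E\cap B_{r_{k}}(x_{0}))\to\mm(E)$. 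So fix $x_{0}\in X$ and $D_{0}>0$ with $E\subset B_{D_{0}}(x_{0})$, and for $R>D_{0}$ large enough that $\mm(E)<\mm(B_{R}(x_{0}))$ put
$$f_{R}:=\frac{\ind_{E}}{\mm(E)}-\frac{\ind_{B_{R}(x_{0})}}{\mm(B_{R}(x_{0}))},\qquad \int_{X}f_{R}\,d\mm=0.$$
Applying the $L^{1}$-localization theorem for essentially non-branching $\MCP(0,N)$ spaces to $f_{R}$ yields a disintegration $\mm|_{\mathcal T_{R}}=\int_{Q}\mm_{q}\,d\q_{R}(q)$ with $B_{R}(x_{0})\subset\mathcal T_{R}$ modulo $\mm$-null sets, each $\mm_{q}=h_{q}\,\mathcal H^{1}|_{X_{q}}$ concentrated on a transport ray $X_{q}$, $h_{q}$ a one-dimensional $\MCP(0,N)$ density, and $\int f_{R}\,d\mm_{q}=0$ for $\q_{R}$-a.e.\ $q$.

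Parametrize $X_{q}$ by arclength on $[a_{q},b_{q}]$ so that the potential is non-increasing. The source $\{f_{R}>0\}\cap X_{q}=E\cap X_{q}$ sits at the top of the ray (nothing can lie above it: above the source one would need either sink mass, which lies below the source, or a point where $f_{R}=0$, which lies outside $B_{R}(x_{0})$ hence at the far, low-potential end); thus $E\cap X_{q}=[a_{q},c_{q}]$ and, since $E\subset B_{D_{0}}(x_{0})$, also $a_{q},c_{q}\in\overline E$, whence $\rho_{q}:=c_{q}-a_{q}\le 2D_{0}$. The sink $(B_{R}(x_{0})\setminus E)\cap X_{q}$ lies below $c_{q}$; letting $d_{q}$ be its far endpoint, $d_{q}\in\overline{B_{R}(x_{0})}$ and $d_{q}-a_{q}\le R+D_{0}$. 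With $v_{q}:=\mm_{q}(E\cap X_{q})$, $w_{q}:=\mm_{q}((B_{R}(x_{0})\setminus E)\cap X_{q})$, the vanishing-mean condition gives $v_{q}=\lambda_{R}w_{q}$ with $\lambda_{R}:=\mm(E)/(\mm(B_{R}(x_{0}))-\mm(E))$, and globally $\int_{Q}v_{q}\,d\q_{R}=\mm(E)$. In the one-dimensional $\MCP(0,N)$ class the density $h_{q}$ on $[a_{q},b_{q}]$ is characterized by the two monotonicities $x\mapsto h_{q}(x)/(x-a_{q})^{N-1}$ non-increasing and $x\mapsto h_{q}(x)/(b_{q}-x)^{N-1}$ non-decreasing. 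Integrating the first over $[c_{q},d_{q}]$ and the second over $[a_{q},c_{q}]$ gives, respectively,
$$w_{q}\le\frac{h_{q}(c_{q})\bigl((d_{q}-a_{q})^{N}-\rho_{q}^{N}\bigr)}{N\rho_{q}^{N-1}}\le\frac{h_{q}(c_{q})(R+D_{0})^{N}}{N\rho_{q}^{N-1}},\qquad v_{q}\le h_{q}(c_{q})\,\rho_{q}\Bigl(1+\tfrac{\rho_{q}}{b_{q}-c_{q}}\Bigr)^{N-1}.$$
Discard the \emph{short} rays, those whose sink stays inside $B_{\sqrt R}(x_{0})$: they collectively carry $w$-mass $O(R^{N/2})=o(\mm(B_{R}(x_{0})))$, hence (via $v_{q}=\lambda_{R}w_{q}$) an $o_{R}(1)$-fraction of $\int_{Q}v_{q}\,d\q_{R}=\mm(E)$. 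On the remaining \emph{long} rays the correction factor is $1+o_{R}(1)$ uniformly, so $h_{q}(c_{q})\ge(1-o_{R}(1))v_{q}/\rho_{q}$; raising the first bound to the power $\tfrac1N$, this one to the power $\tfrac{N-1}{N}$, and multiplying, the $\rho_{q}$'s cancel and
$$h_{q}(c_{q})\ge(1-o_{R}(1))\,\frac{N^{1/N}\,v_{q}}{\bigl(\lambda_{R}(R+D_{0})^{N}\bigr)^{1/N}}.$$

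To conclude, for small $\varepsilon$ one has $E^{\varepsilon}\cap X_{q}\supset[a_{q},c_{q}+\varepsilon]$, so $\mm(E^{\varepsilon})-\mm(E)\ge\int_{Q}(\int_{c_{q}}^{c_{q}+\varepsilon}h_{q})\,d\q_{R}(q)$; dividing by $\varepsilon$, passing to $\liminf_{\varepsilon\downarrow0}$, using Fatou and the continuity of $h_{q}$ at the interior point $c_{q}$,
$$\mm^{+}(E)\ \ge\ \int_{Q}h_{q}(c_{q})\,d\q_{R}(q)\ \ge\ (1-o_{R}(1))\,\frac{N^{1/N}}{\bigl(\lambda_{R}(R+D_{0})^{N}\bigr)^{1/N}}\int_{Q_{\mathrm{long}}}v_{q}\,d\q_{R}(q).$$
Since $\lambda_{R}(R+D_{0})^{N}=\mm(E)(R+D_{0})^{N}/(\mm(B_{R}(x_{0}))-\mm(E))\to\mm(E)/(\omega_{N}\AVR_{(X,\sfd,\mm)})$ and $\int_{Q_{\mathrm{long}}}v_{q}\,d\q_{R}\to\mm(E)$, letting $R\to\infty$ gives $\mm^{+}(E)\ge\bigl(N\omega_{N}\AVR_{(X,\sfd,\mm)}\bigr)^{1/N}\mm(E)^{(N-1)/N}$, which is \eqref{E:inequality}. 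The step I expect to be the genuine obstacle is exactly this passage to the limit — rigorously carrying out the scaling limit of localization, i.e.\ controlling the discarded rays and the correction factors uniformly in $q$ as $R\to\infty$; once that is in place the sharp constant drops out of the trivial splitting $\tfrac1N+\tfrac{N-1}{N}=1$, with no concavity/Jensen step (which is precisely why the argument, unlike the $\CD$ case, does not reach the strictly larger constant $N\omega_{N}^{1/N}\AVR^{1/N}$). Sharpness of \eqref{E:inequality} is witnessed by the \emph{blunt cone} $\bigl([0,\infty),|\cdot|,h\,\mathcal L^{1}\bigr)$ with $h\equiv c$ on $[0,\rho]$ and $h(x)=c\,(x/\rho)^{N-1}$ on $[\rho,\infty)$: it is essentially non-branching, satisfies $\MCP(0,N)$ (its density obeys the two monotonicities above) but not $\CD(0,N)$, has $\omega_{N}\AVR_{(X,\sfd,\mm)}=c/(N\rho^{N-1})$, and the set $E=[0,\rho]$ attains equality.
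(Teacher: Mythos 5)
Your scaling-limit framework (localize the zero-mean function $(\ind_E,\ind_{B_R})$, estimate each needle, send $R\to\infty$ so that only $\mm(B_R)/R^N\to\omega_N\AVR$ survives), the reduction to bounded sets, and the sharpness example are all sound, and they match the paper's strategy in spirit; the ``short rays'' truncation and the limit $R\to\infty$, which you flag as the main obstacle, are in fact handled adequately. The genuine gap is earlier, at the needle level: your whole quantitative estimate rests on the structural claim that for $\qq_R$-a.e.\ needle the trace $E\cap X_q$ is exactly an initial segment $[a_q,c_q]$ of the ray, with all the sink mass lying in $[c_q,d_q]$. This is not justified and is false for a general Borel set $E$: a transport ray may enter and leave $E$ several times (think of $E$ a ball together with an annulus around it, with rays running radially outward through source, sink, source, sink), and nothing in the localization theorem or in the sign structure of $f_R$ prevents this; the correct general fact is only that the partial integrals of $f_R$ along the ray are non-negative, which is compatible with interleaving. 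Your parenthetical argument (``sink mass lies below the source'') assumes precisely what has to be proved. Once the interval structure fails, every step that uses it collapses: $v_q$ is no longer $\int_{a_q}^{c_q}h_q$, the sink need not be confined to $[c_q,d_q]$ (so the bound on $w_q$ fails), and the Minkowski-content lower bound $\mm_q^+(E)\ge h_q(c_q)$ via $[c_q,c_q+\varepsilon]$ is no longer available. In effect your two-monotonicity computation evaluates the isoperimetric ratio only in the extremal configuration (source at the top of the needle) without showing that this configuration is extremal among all traces of the same measure.

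This is exactly the point the paper circumvents: instead of analysing the geometry of $E\cap X_{\alpha,R}$, it restricts and normalizes each needle to a window of length $T_{\alpha,R}\le R+\diam(E)$ containing $E\cap X_{\alpha,R}$, and then invokes the sharp one-dimensional isoperimetric inequality for essentially non-branching $\MCP(0,N)$ spaces of bounded diameter (Cavalletti--Santarcangelo), which holds for \emph{arbitrary} Borel subsets of the needle: $\bar\mm_{\alpha,R}^{+}(E)\ge \mathcal{I}^{\MCP}_{0,N,T_{\alpha,R}}\bigl(\mm(E)/\mm(B_R)\bigr)$. The sharp constant then comes from the small-volume expansion $\mathcal{I}^{\MCP}_{0,N,D}(v)\ge \frac1D\bigl(N^{1/N}v^{\frac{N-1}{N}}+o(v^{\frac{N-1}{N}})\bigr)$ together with the monotonicity of the profile in $D$, after which the $R\to\infty$ limit proceeds as in your write-up. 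To repair your proof you would either have to import that one-dimensional result (at which point your monotonicity computation becomes redundant), or prove directly that among subsets of a one-dimensional $\MCP(0,N)$ needle with prescribed measure the initial interval minimizes the Minkowski content up to the correct asymptotic constant --- which is the nontrivial content of the quoted 1D theorem, not a consequence of the two density monotonicities alone.
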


Some comments on Theorem \ref{T:main1} are in order. \\
Notice the slightly worse constant of \eqref{E:inequality} 
compared to \eqref{E:inequalityBalKri}.
The rigidity side of Theorem \ref{T:main1}, due to the possibly nonlinear 
infinitesimal structure of the space, seems to be at the moment out of reach. 
Notice for instance that no metric Splitting Theorem is available for $\MCP(0,N)$ 
spaces (for the weaker measure-theoretic splitting in this setting we 
refer to \cite[Theorem 7.1]{CM18}). 

Finally, let us mention that Theorem \ref{T:main1} will not imply a non-trivial isoperimetric inequality in the Heisenberg group of any dimension. Indeed while for instance 
the first Heisenberg group satisfies $\MCP(0,5)$, the volume of its geodesic balls grows with the fourth power, giving zero $\AVR$.
For the details we refer to \cite{BarRiz19} and to \cite{MilQCD}.

The paper is organised as follows: in Section \ref{S:preliminaries} 
we recall the basics for $\MCP$, localization  and the 
isoperimetric inequality in these spaces. 
Section \ref{S:inequality} contains the proof of Theorem \ref{T:main1}.

\section{Preliminaries}\label{S:preliminaries}
A triple $(X,\sfd,\mm)$  is called  metric measure space (or m.m.s.)  if $(X,\sfd)$ a Polish space (i.e. a  complete and separable metric space) and $\mm$ is  a positive Radon measure over $X$.  

We will briefly recall the basic notations to give the definition of the $\MCP(0,N)$.
The space of constant speed geodesics is denoted 
$$
\Geo(X) : = \{ \gamma \in C([0,1], X):  \sfd(\gamma_{s},\gamma_{t}) = |s-t| \sfd(\gamma_{0},\gamma_{1}), \text{ for every } s,t \in [0,1] \},
$$
and the metric space $(X,\sfd)$ is a geodesic space if for each $x,y \in X$ 
there exists $\gamma \in \Geo(X)$ so that $\gamma_{0} =x, \gamma_{1} = y$.


$\mathcal{P}(X)$ is the  space of Borel probability measures over $X$ and 
$\mathcal{P}_{2}(X)$ the subset of those measures with finite second moment; on
$\mathcal{P}_{2}(X)$ one defines the $L^{2}$-Wasserstein distance  $W_{2}$:  
for $\mu_0,\mu_1 \in \mathcal{P}_{2}(X)$,
$$
  W_2^2(\mu_0,\mu_1) := \inf_{ \pi} \int_{X\times X} \sfd^2(x,y) \, \pi(dxdy),
$$
where the infimum is taken over all $\pi \in \mathcal{P}(X \times X)$ with $\mu_0$ and $\mu_1$ as the first and the second marginals.
The space $(X,\sfd)$ is geodesic  if and only if the space $(\mathcal{P}_2(X), W_2)$ is geodesic.  For any $t\in [0,1]$,  let ${\rm e}_{t}$ denote the evaluation map: 
$$
  {\rm e}_{t} : \Geo(X) \to X, \qquad {\rm e}_{t}(\gamma) : = \gamma_{t}.
$$
Any geodesic $(\mu_t)_{t \in [0,1]}$ in $(\mathcal{P}_2(X), W_2)$  can be lifted to a measure $\nu \in {\mathcal {P}}(\Geo(X))$, 
so that $({\rm e}_t)_\sharp \, \nu = \mu_t$ for all $t \in [0,1]$, where as usual 
the subscript $\sharp$ denotes the push-forward.

Given $\mu_{0},\mu_{1} \in \mathcal{P}_{2}(X)$, we denote by 
$\Opt(\mu_{0},\mu_{1})$ the space of all $\nu \in \mathcal{P}(\Geo(X))$ for which $({\rm e}_0,{\rm e}_1)_\sharp\, \nu$ 
realizes the minimum in the definition of $W_{2}$. Such a $\nu$ will be called \emph{dynamical optimal plan}. If $(X,\sfd)$ is geodesic, then the set  $\Opt(\mu_{0},\mu_{1})$ is non-empty for any $\mu_0,\mu_1\in \mathcal{P}_2(X)$.
Finally $\mathcal{P}_{2}(X,\sfd,\mm)\subset \mathcal{P}_{2}(X)$
is the subspace formed by all those measures absolutely continuous with respect to $\mm$.


\subsection{Measure-Contraction Property}

We briefly describe the $\MCP$ condition,  
a synthetic way to impose a
Ricci curvature lower bound and 
an upper bound on the dimension.  To keep the presentation as simple as possible 
we will confine ourselves to the case $K = 0$.
The interested reader can consult \cite{Ohta1}.


\begin{definition}[$\MCP(0,N)$] \label{D:Ohta1}
A m.m.s. $(X,\sfd,\mm)$ is said to satisfy $\MCP(0,N)$ if for any $o \in \supp(\mm)$ and  $\mu_0 \in \P_2(X,\sfd,\mm)$ of the form $\mu_0 = \frac{1}{\mm(A)} \mm\llcorner_{A}$ for some Borel set $A \subset X$ with $0 < \mm(A) < \infty$, 
there exists $\nu \in \Opt(\mu_0, \delta_{o} )$ such that:
\begin{equation} \label{eq:MCP-def}
\frac{1}{\mm(A)} \mm \geq (\ee_{t})_{\sharp} \big(  (1-t)^{N} \nu(d \gamma) \big) \;\;\; \forall t \in [0,1] .
\end{equation}
\end{definition}

If $(X,\sfd,\mm)$ is a m.m.s. verifying $\MCP(K,N)$, no matter for which $K \in \R$, then $(\supp(\mm),\sfd)$  is Polish, proper and it is a geodesic space. 
With no loss in generality for our purposes we will assume that $X = \supp(\mm)$. 


To conclude, we report the following important fact
\cite[Theorem 3.2]{Ohta1}: if $(M,g)$ is $n$-dimensional Riemannian manifold 
with $n\geq 2$, the m.m.s. $(M,d_{g},\vol_{g})$ verifies $\MCP(K,n)$ if and only if $Ric_{g} \geq K g$, where $d_{g}$ is the geodesic distance induced by $g$ and $\vol_{g}$ the volume measure.

We refer to \cite{Ohta1, sturm:II} for more general results.
For the other synthetic versions of Ricci curvature lower bounds 
we refer to \cite{lottvillani:metric,sturm:I,sturm:II}.


\medskip
\subsection{Isoperimetry in normalised \texorpdfstring{$\MCP$}{MCP} spaces}\label{Ss:IsopMCP}

Isoperimetric inequalities relate the size of the boundary of a set to its volume.  
A handy way to measure the boundaries is the outer Minkowski content: for 
any Borel set $E \subset X$ having $\mm(E) < \infty$
$$
\mm^{+}(E) : = \liminf_{\ve \to 0} \frac{\mm(E^{\ve}) - \mm(E)}{\ve}, \qquad E^{\ve} 
: = \{ y \in X \colon \sfd(y,E) \leq \ve \}; 
$$
the isoperimetric profile function of a \mms\ $(X,\sfd,\mm)$ 
is defined as follows:
$$
\mathcal{I}_{(X,\sfd,\mm)}(v) : = \inf \{ \mm^{+}(E) \colon \mm(E) = v \}. 
$$

A sharp isoperimetric inequality \`a la Levy-Gromov for the class of spaces verifying 
$\MCP(K,N)$ with finite diameter and total mass 1 is the main content of \cite{CS19}.

\begin{theorem}[\cite{CS19}]\label{T:ISOMCP}
Let $K,N,D\in \R$ with $N>1$ and $D>0$. Then there exists an explicit non-negative function 
$\I_{K,N,D}^{\MCP} : [0,1] \to \R$ such that the following holds. 

If $(X,\sfd,\mm)$ is an essentially non-branching m.m.s. verifying $\MCP(K,N)$ with 
$\mm(X) = 1$ and having diameter less than $D$  and $A\subset X$, then
\begin{equation}\label{E:ISOMCP}
\mm^{+}(A) \geq \I_{K,N,D}^{\MCP}(\mm(A)).
\end{equation}
Moreover \eqref{E:ISOMCP} is sharp, i.e. for each $v \in [0,1]$, $K,N,D$ there exists 
a m.m.s. $(X,\sfd,\mm)$ with $\mm(X)=1$ and $A\subset X$ 
with $\mm(A) = v$ such that \eqref{E:ISOMCP} is an equality. 
\end{theorem}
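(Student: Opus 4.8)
The plan is to carry out the $\LL^1$-localization (needle decomposition) paradigm and reduce the statement to a one-dimensional isoperimetric problem for $\MCP(K,N)$ densities, which will simultaneously produce the function $\I^{\MCP}_{K,N,D}$. Fix $A\subset X$ with $0<\mm(A)<1$ (the cases $\mm(A)\in\{0,1\}$ are trivial), set $v:=\mm(A)$ and $f:=\ind_A-v$, so that $\int_X f\,d\mm=0$ and, since $\diam X\le D<\infty$ and $\mm(X)=1$, also $\int_X|f|\,\sfd(\cdot,x_0)\,d\mm<\infty$. First I would apply the localization theorem associated with $f$, available for essentially non-branching, proper, geodesic spaces: it produces a partition $\{X_q\}_{q\in Q}$ of a set of full $\mm$-measure into transport rays, together with a disintegration $\mm=\int_Q\mm_q\,\q(dq)$, $\q\in\mathcal P(Q)$, such that for $\q$-a.e.\ $q$: $\mm_q$ is a probability concentrated on $\overline{X_q}$, the one-dimensional m.m.s.\ $(X_q,\sfd,\mm_q)$ verifies $\MCP(K,N)$, $\diam(X_q)\le D$, and the balancing condition $\int_{X_q}f\,d\mm_q=0$ holds, i.e.\ $\mm_q(A\cap X_q)=v$.

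Next I would localize the boundary measure. Using $A^{\eps}\cap X_q\supseteq\{y\in X_q:\sfd(y,A\cap X_q)\le\eps\}$ for every $\eps>0$, one gets
\begin{equation*}
\mm(A^{\eps})-\mm(A)\ \ge\ \int_Q\Bigl(\mm_q\bigl(\{y\in X_q:\sfd(y,A\cap X_q)\le\eps\}\bigr)-\mm_q(A\cap X_q)\Bigr)\,\q(dq);
\end{equation*}
dividing by $\eps$, sending $\eps\to0$ and using Fatou's lemma (with the measurability of $q\mapsto\mm_q^{+}(A\cap X_q)$) yields
\begin{equation}\label{E:plan-local-per}
\mm^{+}(A)\ \ge\ \int_Q\mm_q^{+}(A\cap X_q)\,\q(dq),
\end{equation}
where, each $X_q$ being a geodesic segment, $\mm_q^{+}$ is the genuine one-dimensional outer Minkowski content inside $X_q$ (its intrinsic metric being the restriction of $\sfd$).

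The central step is then to bound the integrand in \eqref{E:plan-local-per} from below. I would define $\I^{\MCP}_{K,N,D}(v)$ as the infimum of $\mm_h^{+}(B)$ over all one-dimensional $\MCP(K,N)$ m.m.s.\ $(I,|\cdot|,h\,\mathcal L^1)$ with $|I|\le D$, $\int_I h=1$ and $B\subset I$ with $\int_B h=v$; then each needle yields $\mm_q^{+}(A\cap X_q)\ge\I^{\MCP}_{K,N,D}(v)$, and inserting this in \eqref{E:plan-local-per} together with $\q\in\mathcal P(Q)$ gives \eqref{E:ISOMCP}. To see that $\I^{\MCP}_{K,N,D}$ is an explicit non-negative function I would solve this one-dimensional problem: first record the characterization that $h$ is an $\MCP(K,N)$ density on $[\ell,r]$ iff $x\mapsto h(x)/\fs_{K,N-1}(x-\ell)^{N-1}$ is non-increasing and $x\mapsto h(x)/\fs_{K,N-1}(r-x)^{N-1}$ is non-decreasing (with $\fs_{K,N-1}$ the one-dimensional model sine); then show by rearrangement that the optimal set is a ``cap'' $[\ell,a]$; and finally carry out a variational analysis over this admissible class — which is strictly larger than the $\CD(K,N)$ one, since no two-sided concavity of $h^{1/(N-1)}$ is imposed. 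I expect the extremal density to be an explicit two-piece model, $h=c_2\,\fs_{K,N-1}(r-\cdot)^{N-1}$ on $[\ell,a]$ and $h=c_1\,\fs_{K,N-1}(\cdot-\ell)^{N-1}$ on $[a,r]$ with $r-\ell=D$ and $a,c_1,c_2$ fixed by continuity at $a$ and the two mass constraints, giving $\I^{\MCP}_{K,N,D}(v)=h(a^{+})$ explicitly.

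For sharpness I would observe that this extremal two-piece model is itself an essentially non-branching (being one-dimensional) m.m.s.\ verifying $\MCP(K,N)$, with unit mass and diameter at most $D$; taking $A=[\ell,a]$ turns \eqref{E:ISOMCP} into an equality at the prescribed value $v$. The hard part is precisely the one-dimensional optimization above: because $\MCP(K,N)$ imposes only a one-sided contraction on the density, the family of competing profiles is genuinely richer than in the $\CD$ setting, so the extremal $\I^{\MCP}_{K,N,D}$ is new, and identifying it — together with verifying both the lower bound and its sharpness uniformly in the sign of $K$ through the coefficients $\fs_{K,N-1}$ — requires the delicate variational/ODE analysis sketched here; a secondary technical point is the rigorous justification of \eqref{E:plan-local-per}, i.e.\ the measurability and the Fatou step near the endpoints of the rays.
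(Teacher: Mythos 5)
This theorem is not proved in the paper: it is imported verbatim from \cite{CS19} (the paper only recalls the explicit formulas \eqref{equ:sharp1d}--\eqref{E:anotherway} for $K=0$), so there is no internal proof to compare against. That said, your sketch reconstructs the strategy actually used in \cite{CS19}: $L^1$-localization for essentially non-branching $\MCP(K,N)$ spaces (the analogue of Theorem \ref{T:locMCP}) applied to $f=\ind_A-\mm(A)$, which never vanishes so the needles cover $X$ up to an $\mm$-null set; the Fatou-type reduction of $\mm^+(A)$ to the needle-wise Minkowski contents; and the definition of $\I^{\MCP}_{K,N,D}$ as the sharp constant of the one-dimensional problem over $\MCP(K,N)$ densities characterized by the two one-sided monotonicity conditions (for $K=0$ these are exactly \eqref{E:MCPdef2}). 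Your guessed extremizer — a cap $[\ell,a]$ together with the two-piece density proportional to $\fs_{K,N-1}(r-\cdot)^{N-1}$ before $a$ and to $\fs_{K,N-1}(\cdot-\ell)^{N-1}$ after, with $\I^{\MCP}_{K,N,D}(v)=h(a)$ — is precisely what the recalled formulas encode in the case $K=0$ ($f_{0,N,D}$ is the normalizing value of the density at the interface and $v_{0,N,D}(a)$ the mass of the cap), and it is a legitimate one-dimensional, hence essentially non-branching, $\MCP(K,N)$ space, so it also yields the sharpness statement. The one point to be transparent about is that the heart of the matter — proving that the combined infimum over sets and admissible densities is attained by this cap/two-piece configuration, uniformly in the sign of $K$ — is exactly the technical content of \cite{CS19} and is only sketched in your proposal; the plan and the predicted answer are nonetheless correct, as is your flag that the measurability/Fatou step in the needle decomposition needs routine justification.
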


The sharp lower bound on the isoperimetric profile function \eqref{E:ISOMCP}
has an explicit expression. We report only the case $K = 0$:  for each $v \in [0,1]$
\begin{equation}
\label{equ:sharp1d}
\I_{0,N,D}^{\MCP}(v)=
f_{0,N,D} (a_{0,N,D}(v)),
\end{equation}
where the function $f_{0,N,D}$ is defined in the following way
$$
 f_{0,N,D}(x) : = \left( 
\int_{(0,x)}\left( \frac{D -  y}{D - x} \right)^{N-1}  dy 
+\int_{(x,D)} \left( \frac{y}{x} \right)^{N-1} dy \right)^{-1},
$$
and the function $a_{0,N,D}$ is obtained as follows:
define the function 
\begin{equation}\label{E:anotherway}
v_{0,N,D}(a) = \frac{f_{0,N,D}(a)}{(D-a)^{N-1}} \int_{(0,a)}
(D- x)^{N-1} \, dx
=
f_{0,N,D}(a)
\frac{D^N-(D-a)^N}{N(D-a)^{N-1}};
\end{equation}
as proved in \cite{CS19}, for each $N,D$ 
it is possible to define the inverse map of $v$:
\begin{equation*}
[0,1] \ni v \longmapsto a_{0,N,D}(v) \in (0,D); 
\end{equation*}
hence we have recalled the definition of each function used in \eqref{equ:sharp1d} 
to construct the lower bound $\I_{0,N,D}^{\MCP}$.

For Ricci lower bounds other than 0, we refer to \cite{CS19}.

\smallskip

\subsection{Localization in compact \texorpdfstring{$\MCP$}{MCP} spaces}
\label{Ss:local}

The localization method reduces the task of establishing various analytic and geometric inequalities on a higher dimensional space to the one-dimensional setting. 
It has been established in numerous settings that we now shortly review. 

In the Euclidean setting, the approach has its roots in the work of Payne and Weinberger \cite{PayneWeinberger} on the spectral gap for convex domains, 
and has been further developed and popularised by Gromov and V. Milman \cite{Gromov-Milman} and Kannan, Lovasz and Simonovits \cite{KLS}. 
In a ground-breaking work in 2015, Klartag \cite{klartag} reinterpreted the localization method as a measure disintegration adapted to 
$L^{1}$-Optimal-Transport, 
and extended it to weighted Riemannian manifolds satisfying $\CD(K,N)$. 

Subsequently, the first author and Mondino \cite{CM1} have succeeded to 
extend this technique to the non-smooth framework of essentially non-branching geodesic \mms's $(X,\sfd,\mm)$ verifying $\CD(K,N)$, $N \in (1,\infty)$. 
In the non-smooth world are also worth  mentioning the generalisation to the Finsler setting by Ohta \cite{Ohta2} and to the Lorentzian length spaces by Cavalletti–Mondino \cite{CM20}.

Localization for $\MCP(K,N)$ was, partially and in a different form, already known 
in 2009, see \cite[Theorem 9.5]{biacava:streconv}, for non-branching \mms's. 
The case of essentially non-branching \mms's and the effective reformulation after the work of Klartag \cite{klartag} has been recently discussed in 
\cite[Section 3]{CM18} to which we refer for all the missing details (see in particular \cite[Theorem 3.5]{CM18}).

\smallskip
To properly formulate the version of localization we will adopt, 
we recall the definition 
of essentially non-branching spaces, introduced in \cite{RS2014}. 
A set $G \subset \Geo(X)$ is a set of non-branching geodesics if and only if for any $\gamma^{1},\gamma^{2} \in G$, it holds:
$$
\exists \;  \bar t\in (0,1) \text{ such that } \ \forall t \in [0, \bar t\,] \quad  \gamma_{ t}^{1} = \gamma_{t}^{2}   
\quad 
\Longrightarrow 
\quad 
\gamma^{1}_{s} = \gamma^{2}_{s}, \quad \forall s \in [0,1].
$$
\begin{definition}\label{def:ENB}
A metric measure space $(X,\sfd, \mm)$ is \emph{essentially non-branching} if and only if for any $\mu_{0},\mu_{1} \in \mathcal{P}_{2}(X)$,
with $\mu_{0},\mu_{1}$ absolutely continuous with respect to $\mm$, any element of $\Opt(\mu_{0},\mu_{1})$ is concentrated on a set of non-branching geodesics.
\end{definition}
Clearly the space itself is non-branching if $\Geo(X)$ is a set of non-branching geodesics.

Before stating the localization theorem, we recall few notations: 
we denote by $\H^{1}$ the one-\-dimen\-sion\-al Hausdorff measure on the underlying metric space and by $\mathcal{M} (X,\mathcal{X})$ all the non-negative measures over 
the measurable space $(X,\mathcal{X})$.

Consider a measure space $(X, \mathcal{X} ,\mm)$ and 
$\{X_{\alpha}\}_{\alpha \in Q}$ a partition  of $X$.
A disintegration of
$\mm$ on $\{X_{\alpha}\}_{\alpha \in Q}$ is a measure space structure
$(Q,\mathcal{Q},\qq)$ and a map
$$
Q \ni \alpha \mapsto \mm_{\alpha} \in \mathcal{M} (X,\mathcal{X}),
$$
such that 
\begin{itemize}
\item For $\qq$-a.e. $\alpha \in Q$, $\mm_{\alpha}$ is concentrated on $X_{\alpha}$.
\item For all $B \in \mathcal{X}$ , the map $\alpha \mapsto \mm_{\alpha}(B)$ is 
$\qq$-measurable.
\item For all $B \in \mathcal{X}$, $\mm(B) = \int_{Q} \mm_{\alpha}(B)\,\qq(d\alpha)$; this is abbreviated by $\mm = \int_{Q} \mm_{\alpha}\,\qq(d\alpha)$.
\end{itemize}

\smallskip

\begin{theorem}[Localization on $\MCP(K,N)$ spaces, {\cite[Theorem 3.5]{CM18}}]\label{T:locMCP}
Let $(X,\sfd,\mm)$ be an essentially non-branching m.m.s. with $\supp(\mm) = X$ and satisfying $\MCP(K,N)$, for some $K\in \R, N\in (1,\infty)$.

Let $g : X \to R$ be $\mm$-integrable with $\int_{X} g\,\mm = 0$ and 
$\int_{X} |g(x)|\sfd(x,x_{0})\mm(dx) < \infty$ for some $x_{0} \in X$. 
Then there exists an $\mm$-measurable subset $\mathcal{T} \subset X$ 
and a family $\{X_{\alpha}\}_{\alpha \in Q}$ of subsets of $X$, such that there exists a disintegration of $\mm\llcorner_{\mathcal{T}}$ on 
$\{X_{\alpha}\}_{\alpha \in Q}$:
$$
\mm\llcorner_{\mathcal{T}}= \int_{Q} \mm_{\alpha}\,\qq(d\alpha),\qquad \qq(Q)=1,
$$
and for $\qq$-a.e. $\alpha \in Q$:
\begin{enumerate}
\item  $X_{\alpha}$ is a closed geodesic in $(X, \sfd)$.
\item 
$\mm_{\alpha}$ is a Radon measure supported on $X_{\alpha}$ with 
$\mm_{\alpha} \ll \H^{1}\llcorner_{X_{\alpha}}$. 
\item  The metric measure space 
$(X_{\alpha} , \sfd, \mm_{\alpha} )$ verifies $\MCP(K, N )$. 
\item $\int g\, d\mm_{\alpha} = 0$, 
and $g = 0$ $\mm$-a.e. on $X \setminus \mathcal{T}$.
\end{enumerate}
Finally, the $X_{\alpha}$ are called transport rays and two distinct transport rays 
can only meet at their extremal points.
\end{theorem}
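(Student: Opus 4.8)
The plan is to adapt the $L^{1}$-optimal transport (``localization'') technique to the present setting, as done in \cite{CM18} (in the spirit of \cite{klartag,CM1,biacava:streconv}). First I would split $g=g^{+}-g^{-}$ with $g^{\pm}=\max(\pm g,0)$, put $m:=\int g^{+}\,\mm=\int g^{-}\,\mm$, and observe that the first-moment hypothesis makes $\mu^{+}:=m^{-1}g^{+}\mm$ and $\mu^{-}:=m^{-1}g^{-}\mm$ probability measures of finite first moment; hence the Monge--Kantorovich problem with cost $\sfd$ between them has finite value and, by Kantorovich duality, admits a $1$-Lipschitz potential $u:X\to\R$. From the transport relation $\{(x,y):u(x)-u(y)=\sfd(x,y)\}$, after discarding an $\mm$-negligible set of branching and degenerate points, one isolates the \emph{transport set} $\mathcal T$, partitioned into the maximal oriented geodesics along which $u$ decreases with unit speed: these are the transport rays $\{X_{\alpha}\}_{\alpha\in Q}$, each a closed geodesic, which yields item (1). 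The second half of item (4), that $g=0$ $\mm$-a.e.\ on $X\setminus\mathcal T$, follows since $\{g\neq0\}\subset\supp\mu^{+}\cup\supp\mu^{-}$ lies in $\mathcal T$ up to the exceptional set.

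The first substantial point is the structure of $\mathcal T$: for an essentially non-branching space -- a property stable under the preceding reductions -- the rays $X_{\alpha}$ genuinely partition $\mathcal T$ and two distinct rays can meet only at their endpoints. This is precisely where the essential non-branching hypothesis is used, and it is the technical core of the theory, inherited from the study of the Monge problem in metric spaces. Granting it, I would construct a measurable ``ray map'' parametrising each $X_{\alpha}$ by an interval via the level sets of $u$, and apply the Disintegration Theorem to get $\mm\llcorner_{\mathcal T}=\int_{Q}\mm_{\alpha}\,\qq(d\alpha)$ with $\mm_{\alpha}$ concentrated on $X_{\alpha}$ and $\qq$ normalised to a probability.

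Next I would identify the conditionals by localizing the $\MCP(K,N)$ condition of $X$ along the rays: using the contraction inequality \eqref{eq:MCP-def} with $\mu_{0}$ a normalised restriction of $\mm$ near a point of a ray and target another point of the same ray, together with the fact that the contracting geodesics then run along the rays themselves, one obtains for $\qq$-a.e.\ $\alpha$ the one-dimensional contraction inequality, i.e.\ $(X_{\alpha},\sfd,\mm_{\alpha})$ satisfies $\MCP(K,N)$; this is the step that uses only $\MCP$, whereas in the $\CD$ case one argues with concavity of the $(N-1)$-th root of the density. In particular the density of $\mm_{\alpha}$ with respect to $\H^{1}\llcorner_{X_{\alpha}}$ is locally bounded and locally bounded below in the interior of $X_{\alpha}$, so $\mm_{\alpha}\ll\H^{1}\llcorner_{X_{\alpha}}$ and charges no endpoint -- items (2) and (3). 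For the first half of item (4), $\sfd$-cyclical monotonicity together with non-branching lets one choose an optimal plan moving mass monotonically \emph{along} the rays, hence disintegrating as $\pi=\int\pi_{\alpha}\,\qq(d\alpha)$ with $\pi_{\alpha}$ transporting $g^{+}\mm_{\alpha}$ onto $g^{-}\mm_{\alpha}$; therefore $\int_{X_{\alpha}}g^{+}\,d\mm_{\alpha}=\int_{X_{\alpha}}g^{-}\,d\mm_{\alpha}$, i.e.\ $\int g\,d\mm_{\alpha}=0$.

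The hard part will be the structural analysis of the transport set -- the non-branching of the transport rays and the attendant partition and measurability statements -- together with, as the $\MCP$-specific ingredient, the localization of the contraction inequality \eqref{eq:MCP-def} onto the one-dimensional conditionals, which replaces the curvature-dimension argument used in the $\CD$ setting.
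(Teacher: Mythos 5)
The theorem you are proving is not actually proved in this paper: it is quoted verbatim from \cite[Theorem 3.5]{CM18}, and your outline follows exactly the route of that reference and its antecedents \cite{klartag,CM1,biacava:streconv} ($L^1$ Kantorovich duality for $g^+\mm$ versus $g^-\mm$, transport set and rays of a $1$-Lipschitz potential, partition and measurability under essential non-branching, Disintegration Theorem, one-dimensional localization of the contraction inequality, and ray-wise mass balance giving $\int g\,d\mm_\alpha=0$), so it is consistent with the paper's (cited) proof. Note only that what you flag as ``the hard part'' --- the structural analysis of the transport set and the passage from \eqref{eq:MCP-def} to the $\MCP(K,N)$ property of the conditional measures --- is precisely the content of the cited proof, so your text is a faithful plan of the same argument rather than an alternative or self-contained one.
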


It is worth stressing that 
the restriction to essentially non-branching m.m.s.'s is done to avoid pathological cases:
as an example of possible pathological behaviour we mention the failure of the local-to-global property of $\CD(K,N)$ within this class of spaces; in particular, a heavily-branching m.m.s. verifying $\CD_{loc}(0,4)$ which does not verify $\CD(K,N)$ for any fixed $K \in \R$ and $N \in [1,\infty]$ was constructed by Rajala in \cite{R2016}, while the local-to-global property of $\CD(K,N)$
has been recently proved to hold \cite{CMi16} for essentially non-branching m.m.s.'s.

\smallskip
As localization method suggests, 
a relevant class of spaces needed for our purposes  is the one of one-dimensional spaces $(X,\sfd,\mm)=(I,|\cdot|, h \L^1)$.
It is a standard fact that the m.m.s. $(I,|\cdot|,h\L^{1})$ verifies $\MCP(0,N)$ if and only if the non-negative Borel function $h$ satisfies the following inequality: 
\begin{equation}\label{E:MCPdef}
 h(t x_1 + (1-t) x_0) \geq (1-t)^{N-1} h(x_0),
\end{equation}
for all $x_0,x_1 \in I$ and $t \in [0,1]$, see for instance 
\cite[Theorem 9.5]{biacava:streconv} where also the case $K \neq 0$ is discussed. 
We will call $h$ an $\MCP(0,N)$-density.

\smallskip
Inequality \eqref{E:MCPdef} implies several known properties that we recall for readers' convenience.
If we confine ourselves to the case $I = (a,b)$ with $a,b \in \R$
\eqref{E:MCPdef} implies (actually  is equivalent to)
\begin{equation}\label{E:MCPdef2}
\left( \frac{b - x_{1} }{b - x_{0}} \right)^{N-1} 
\leq \frac{h(x_{1} ) }{h (x_{0})} \leq 
\left( \frac{ x_{1} -a  }{ x_{0} -a } \right)^{N-1}, 
\end{equation}
for $x_{0} \leq x_{1}$. 
In particular, if we consider the unbounded case $I=[0,+\infty)$,
  \eqref{E:MCPdef} is equivalent to
\begin{equation}\label{E:MCPdef3}
1
\leq \frac{h(x_{1} ) }{h (x_{0})} \leq 
\left( \frac{x_{1}}{x_{0}} \right)^{N-1}.
\end{equation}
%
In both cases, $h$ is locally Lipschitz in the interior of $I$ and continuous up to the boundary.
%
%
%

\bigskip
\section{Sharp isoperimetric inequalities in \texorpdfstring{$\MCP(0,N)$}{MCP(0,N)}}\label{S:inequality}

To prove Theorem \ref{T:main1} we will need to consider the isoperimetric problem 
inside a family of large subsets of $X$ with diameter approaching $\infty$. 
In order to apply the classical dimension reduction argument furnished by localization theorem (Theorem \ref{T:locMCP}), one needs in principle these subsets to also be convex. As the existence of an increasing family of convex subsets recovering at the limit the whole space $X$ is in general false,   
we will overcome this issue in the following way. 

Given any bounded set $E \subset X$ with $0< \mm(E) < \infty$, fix any point
 $x_{0} \in E$
 and  then consider $R > 0$ such that $E \subset B_{R}$
 (hereinafter we will adopt the following notation $B_R:=B_R(x_0)$).
Consider then the following family of zero mean
 functions: 
$$
g_{R} (x) = \left(\chi_{E} - \frac{\mm(E)}{\mm(B_{R})} \right)\chi_{B_{R}}.
$$ 
Clearly $g_{R}$ satisfies the hypothesis of Theorem \ref{T:locMCP} 
so we obtain an 
$\mm$-measurable subset $\mathcal{T}_{R} \subset X$ 
and a family $\{X_{\alpha,R}\}_{\alpha \in Q_{R}}$  of transport rays, such that there exists a disintegration of $\mm\llcorner_{\mathcal{T}_{R}}$ on 
$\{X_{\alpha,R}\}_{\alpha \in Q_{R}}$:
\begin{equation}\label{E:disintbasic}
\mm\llcorner_{\mathcal{T}_{R}}= \int_{Q_{R}} \mm_{\alpha,R}\,\qq_{R}(d\alpha),\qquad \qq_{R}(Q_{R})=1,
\end{equation}
with the Radon measures $\mm_{\alpha,R}$ having an $\MCP(0,N)$ density with respect to $\H^{1}\llcorner_{X_{\alpha,R}}$. The localization of the zero mean implies that 
$$
\mm_{\alpha,R}(E) = \frac{\mm(E)}{\mm(B_{R})} \mm_{\alpha,R}(B_{R}), \qquad 
\qq_{R}\text{-a.e.} \ \alpha \in Q_{R}.
$$
By using a unit speed parametrisation of the geodesic $X_{\alpha,R}$, 
without loss of generality we can assume that 
$\mm_{\alpha,R} = h_{\alpha,R} \mathcal{L}^{1}\llcorner_{[0,\ell_{\alpha,R}]}$, 
where $\ell_{\alpha,R}$ denotes the (possibly infinite) length of the $X_{\alpha,R}$.
Also we specify that the direction of the parametrisation of $X_{\alpha,R}$
is chosen such that $0 \in E$. Equivalently, if $u_{R}$ denotes a Kantorovich potential 
associated to the localization of $g_{R}$, then the parametrisation is chosen in such a way 
that $u_{R}$ is decreasing along $X_{\alpha,R}$ with slope $-1$.

Now we can define $T_{\alpha,R}$ to be the unique element of $[0,\ell_{\alpha,R}]$
such that %
$\mm_{\alpha,R}([0,T_{\alpha,R}]) = \mm_{\alpha,R}(B_{R})$. 
Notice that  $\diam(B_{R} \cap X_{\alpha,R}) \leq R + \diam(E)$: 
 if $\gamma$ is a unit speed parametrization of $X_{\alpha,R}$, then
$\sfd(\gamma_{0},\gamma_{t}) \leq \sfd(\gamma_{0},x_{0}) + \sfd(\gamma_{t},x_{0}) 
\leq \diam(E) + R$,
provided $\gamma_{t} \in B_{R}\cap X_{\alpha,R}$.  
Hence the same upper bound is valid for $T_{\alpha,R}$, i.e. $T_{\alpha,R}\leq R + \diam(E)$.

The plan will be to restrict $\mm_{\alpha,R}$ to 
${[0,T_{\alpha,R}]}$ so to have the following disintegration: 
\begin{equation}\label{E:disintnormalized}
\mm\llcorner_{\bar{\mathcal{T}}_{R}} = \int_{Q_{R}} \bar{\mm}_{\alpha,R}\, \bar{\qq}_{R}(d\alpha),
\end{equation}
where $\bar{\mm}_{\alpha,R} : =  \mm_{\alpha,R}\llcorner_{[0,T_{\alpha,R}]}/\mm_{\alpha,R}(B_{R})$ are probability measures, 
$\bar{\qq}_{R} = \mm_{\cdot,R}(B_{R}) \qq_{R}$ (in particular 
$\bar{\qq}_{R}(Q_{R}) = \mm(B_{R})$, using \eqref{E:disintbasic} and the fact that 
$B_{R}\subset \mathcal{T}_{R}$) and 
$\bar{\mathcal{T}}_{R} = \cup_{\alpha \in Q_{R}} [0,T_{\alpha,R}]$, where 
we are identifying $[0,T_{\alpha,R}]$ with the geodesic segment of length 
$T_{\alpha,R}$ of $X_{\alpha,R}$, that will be denoted by $\bar X_{\alpha,R}$

The disintegration \eqref{E:disintnormalized} will have applications 
only if $(E \cap X_{\alpha,R}) \subset [0,T_{\alpha,R}]$, implying that 
$$
\bar{\mm}_{\alpha,R}(E) = \frac{\mm(E)}{\mm(B_{R})}, 
\qquad 
\qq_{R}\text{-a.e.} \ \alpha \in Q_{R}.
$$
To prove this inclusion we will impose that $E \subset B_{R/4}$. 
If we denote by $\gamma^{\alpha,R} : [0,\ell_{\alpha,R}] \to X_{\alpha,R}$ the unit speed parametrisation, we notice that  
$$
\sfd(\gamma_{t}^{\alpha,R},x_{0}) \leq \sfd(\gamma_{0}^{\alpha,R},x_{0}) + t \leq \diam(E) + t \leq   \frac{R}{2} + t,
$$
where in the second inequality we have used that each starting point of the transport ray has to be inside $E$, being precisely where $g_{R} > 0$.
Hence $\gamma_{t}^{\alpha,R} \in B_{R}$ for all $t < R/2$. 
This implies that $(\gamma^{\alpha,R})^{-1}(B_{R}) \supset 
[0,\min\{R/2, \ell_{\alpha,R}\}]$, hence ``no holes'' inside $(\gamma^{\alpha,R})^{-1}(B_{R})$
before $\min\{R/2, \ell_{\alpha,R}\}$, implying that
$T_{\alpha,R} \geq \min\{R/2, \ell_{\alpha,R}\}$.

Since $\diam(E) \leq R/2$, necessarily $(\gamma^{\alpha,R})^{-1}(E) 
\subset [0,\min\{R/2,\ell_{\alpha,R}\}]$ implying that 
$(E\cap X_{\alpha,R}) \subset [0,T_{\alpha,R}]$. 
We summarise this construction in the following 

\begin{proposition}\label{P:disintfinal}
Given any bounded $E \subset X$ with $0< \mm(E) < \infty$, fix any point $x_{0} \in E$
and  then fix $R > 0$ such that $E \subset B_{R/4}(x_{0})$.  

Then there exists a Borel set $\bar{\mathcal{T}}_{R} \subset X$, with 
$E \subset \bar{\mathcal{T}}_{R}$ and a disintegration formula 
\begin{equation}\label{E:disintfinal}
\mm\llcorner_{\bar{\mathcal{T}}_{R}} 
= \int_{Q_{R}} \bar{\mm}_{\alpha,R}\, \bar{\qq}_{R}(d\alpha), \qquad 
\bar{\mm}_{\alpha,R}(\bar X_{\alpha,R}) = 1, \qquad \bar{\qq}_{R}(Q_{R}) = \mm(B_{R}),
\end{equation}
such that $\bar{\mm}_{\alpha,R}(E) = \frac{\mm(E)}{\mm(B_{R})}$, $\bar \qq_{R}$-a.e. and 
the one-dimensional \mms \ $(\bar X_{\alpha,R}, \sfd,\bar{\mm}_{\alpha,R})$ 
verifies $\MCP(0,N)$ and has diameter bounded by $R + \diam (E)$.
\end{proposition}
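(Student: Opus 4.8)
The statement is essentially a bookkeeping summary of the construction carried out in the paragraphs immediately preceding it, so the plan is to retrace that construction and verify each claimed property. First I would apply Theorem \ref{T:locMCP} to the zero-mean function $g_R = (\chi_E - \mm(E)/\mm(B_R))\chi_{B_R}$, which is bounded with bounded support (hence satisfies the integrability hypotheses), to obtain the transport set $\mathcal{T}_R$, the quotient space $(Q_R, \qq_R)$, the rays $X_{\alpha,R}$, and the disintegration $\mm\llcorner_{\mathcal{T}_R} = \int_{Q_R} \mm_{\alpha,R}\,\qq_R(d\alpha)$ with each $\mm_{\alpha,R}$ carrying an $\MCP(0,N)$-density $h_{\alpha,R}$ with respect to $\H^1\llcorner_{X_{\alpha,R}}$. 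Property (4) of Theorem \ref{T:locMCP} gives $\int g_R\,d\mm_{\alpha,R} = 0$ for $\qq_R$-a.e.\ $\alpha$, which rearranges to $\mm_{\alpha,R}(E) = \tfrac{\mm(E)}{\mm(B_R)}\mm_{\alpha,R}(B_R)$; note also $g_R = 0$ $\mm$-a.e.\ outside $\mathcal{T}_R$, and since $g_R = \mm(E)/\mm(B_R) > 0$ on $B_R \setminus E$ we get $B_R \subset \mathcal{T}_R$ up to $\mm$-null sets, in particular $E \subset \bar{\mathcal{T}}_R$ after the restriction below.

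Next I would fix the unit-speed parametrization $\gamma^{\alpha,R}\colon[0,\ell_{\alpha,R}]\to X_{\alpha,R}$ oriented so that $0$ lies in $E$ (equivalently, so that a Kantorovich potential $u_R$ decreases along the ray with slope $-1$; this is possible because the set where $g_R > 0$, namely $E$, forces the initial endpoint of each ray into $E$). Under the hypothesis $E \subset B_{R/4}(x_0)$ one has $\diam(E) \le R/2$, and the triangle inequality $\sfd(\gamma^{\alpha,R}_t, x_0) \le \diam(E) + t \le R/2 + t$ shows $\gamma^{\alpha,R}_t \in B_R$ for every $t < R/2$; hence $(\gamma^{\alpha,R})^{-1}(B_R) \supset [0,\min\{R/2,\ell_{\alpha,R}\})$. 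Defining $T_{\alpha,R}$ as the unique point of $[0,\ell_{\alpha,R}]$ with $\mm_{\alpha,R}([0,T_{\alpha,R}]) = \mm_{\alpha,R}(B_R)$ (unique because $h_{\alpha,R}$ is strictly positive on the interior of its domain, by \eqref{E:MCPdef3} together with $h_{\alpha,R}(0)>0$), the ``no holes before $R/2$'' observation yields $T_{\alpha,R} \ge \min\{R/2,\ell_{\alpha,R}\}$, and together with $\diam(E)\le R/2$ this forces $(\gamma^{\alpha,R})^{-1}(E) \subset [0,T_{\alpha,R}]$, i.e.\ $E \cap X_{\alpha,R} \subset \bar X_{\alpha,R}$. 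Consequently $\bar{\mm}_{\alpha,R}(E) := \mm_{\alpha,R}(E)/\mm_{\alpha,R}(B_R) = \mm(E)/\mm(B_R)$.

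Finally I would assemble the renormalized disintegration: set $\bar{\mm}_{\alpha,R} = \mm_{\alpha,R}\llcorner_{[0,T_{\alpha,R}]}/\mm_{\alpha,R}(B_R)$ (a probability measure by the definition of $T_{\alpha,R}$), $\bar{\qq}_R = \mm_{\cdot,R}(B_R)\,\qq_R$, and $\bar{\mathcal{T}}_R = \bigcup_{\alpha} \bar X_{\alpha,R}$. Then for any Borel $B$, $\int_{Q_R}\bar{\mm}_{\alpha,R}(B)\,\bar{\qq}_R(d\alpha) = \int_{Q_R}\mm_{\alpha,R}(B\cap\bar X_{\alpha,R})\,\qq_R(d\alpha) = \mm(B\cap\bar{\mathcal{T}}_R)$, and $\bar{\qq}_R(Q_R) = \int_{Q_R}\mm_{\alpha,R}(B_R)\,\qq_R(d\alpha) = \mm(B_R)$ using $B_R\subset\mathcal{T}_R$ and \eqref{E:disintbasic}; $\mm$-measurability of $\alpha\mapsto\mm_{\alpha,R}(B_R)$ and of the reparametrized data follows from the measurability built into Theorem \ref{T:locMCP}. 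Property (3) of Theorem \ref{T:locMCP} says $(X_{\alpha,R},\sfd,\mm_{\alpha,R})$ satisfies $\MCP(0,N)$; restricting the $\MCP(0,N)$-density to the subinterval $[0,T_{\alpha,R}]$ and rescaling by a constant preserves inequality \eqref{E:MCPdef}, so $(\bar X_{\alpha,R},\sfd,\bar{\mm}_{\alpha,R})$ is again $\MCP(0,N)$; and the diameter bound $T_{\alpha,R} \le R + \diam(E)$ is exactly the elementary estimate already recorded. The only genuinely delicate point — and the one I would treat with care rather than routine — is the measurability of the map $\alpha \mapsto T_{\alpha,R}$ and hence of the whole renormalized family; this is handled by noting that $T_{\alpha,R}$ is determined by the equation $\mm_{\alpha,R}([0,t]) = \mm_{\alpha,R}(B_R)$ with the left side jointly measurable in $(\alpha,t)$ and strictly increasing in $t$, so a measurable selection applies. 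Everything else is direct verification against the statement of Theorem \ref{T:locMCP}.
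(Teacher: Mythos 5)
Your proposal is correct and follows essentially the same route as the paper, whose proof of Proposition \ref{P:disintfinal} is exactly the construction you retrace (localization applied to $g_{R}$, orientation of the rays starting in $E$, truncation at $T_{\alpha,R}$ via the ``no holes'' argument under $E\subset B_{R/4}$, and renormalization of the disintegration), with your added remarks on uniqueness of $T_{\alpha,R}$ and measurability being harmless extra care. The only blemish is a trivial sign slip: on $B_{R}\setminus E$ one has $g_{R}=-\mm(E)/\mm(B_{R})<0$ rather than $>0$, but the argument only needs $g_{R}\neq 0$ $\mm$-a.e.\ on $B_{R}$ to conclude $B_{R}\subset\mathcal{T}_{R}$ up to $\mm$-null sets.
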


\subsection{Proof of the main inequality}

We now look for a simple expansion of $\I_{0,N,D}^{\MCP}(v)$ for 
$v$ close to $0$. 
%
%
%
%
%
For the definition of $\I_{0,N,D}^{\MCP}$ see Section~\ref{Ss:IsopMCP}.

We start with the case $D=1$ and we recall that
$$
f_{0,N,1} (x) = \left( \int_{(0,x)} \left(\frac{1-y}{1-x}\right)^{N-1}\,dy +
\int_{(x,1)} \left(\frac{y}{x}\right)^{N-1}\,dy \right)^{-1},
$$
obtaining 
\begin{align*}
f_{0,N,1} (x) 
&~ = \left( \frac{1 - (1-x)^{N}}{N(1-x)^{N-1}}  
+ \frac{1 - x^{N}}{Nx^{N-1}}\right)^{-1} = 
N
\left( \frac{1}{(1-x)^{N-1}} - 1 + \frac{1}{x^{N-1}}\right)^{-1} 
\\
&~ =  Nx^{N-1} 
\left( \left(\frac{x}{1-x}\right)^{N-1} - x^{N-1} +1 \right)^{-1} = Nx^{N-1}  + o(x^{N-1}).
\end{align*}
Then looking at $v_{0,N,1}(a)$
\begin{align*}
v_{0,N,1}(a) &~ = f_{0,N,1}(a) \frac{1 - (1-a)^{N}}{N(1-a)^{N-1}} \\
&~ = f_{0,N,1}(a)( a + o(a)) = N a^{N} + o(a^{N}),
\end{align*}
giving that 
$a_{0,N,1}(v) = N^{-\frac{1}{N}}v^{\frac{1}{N}} + o(v^{\frac{1}{N}})$
and implying that 
\begin{equation}\label{E:expansion1}
\I_{0,N,1}^{\MCP}(v) = N a_{0,N,1}(v)^{N-1} + o(a_{0,N,1}(v)^{N-1}) 
= N^{\frac{1}{N}} v^{\frac{N-1}{N}} + o(v^{\frac{N-1}{N}}).
\end{equation}

To obtain the general case when $D$ is arbitrary we use the following relation
proved in Lemma~3.9 of~\cite{CS19}: $\I_{0,N,1}^{\MCP} \leq D \I_{0,N,D}^{\MCP}$.
%
%
%
%
%
This means that~\eqref{E:expansion1} yields
\begin{equation}\label{E:expansion}
\I_{0,N,D}^{\MCP}(v) 
\geq \frac{1}{D}\left(N^{\frac{1}{N}} v^{\frac{N-1}{N}} + o(v^{\frac{N-1}{N}})\right).
\end{equation}

We now deduce Theorem \ref{T:main1} from the expansion of 
\eqref{E:expansion} and Theorem \ref{T:ISOMCP} applied to a family of one-dimensional 
spaces. The dimensional reduction argument is a classical application of the localization paradigm.

\begin{theorem}\label{T:isoperimetricAVR}
Let $(X,\sfd,\mm)$ be an essentially non-branching \mms \, verifying  $\MCP(0,N)$ and having $\AVR_{X} > 0$.
Let $E \subset X$ be any Borel set with $\mm(E)<\infty$, then
\begin{equation}\label{E:isopAVR}
\mm^{+}(E) \geq 
\left(N \omega_{N}\AVR_{X}\right)^{\frac{1}{N}} \mm(E)^{\frac{N-1}{N}}.
\end{equation}
\end{theorem}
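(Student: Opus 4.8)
The plan is to prove \eqref{E:isopAVR} first for bounded sets of positive finite measure, combining Proposition~\ref{P:disintfinal}, the one-dimensional inequality of Theorem~\ref{T:ISOMCP} and the expansion \eqref{E:expansion}, and then to remove the boundedness assumption by a truncation argument.

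\emph{Bounded sets.} Let $E$ be bounded with $0<\mm(E)<\infty$ and fix $x_0\in E$. For every $R>4\diam(E)$ we have $E\subset B_{R/4}(x_0)$, so Proposition~\ref{P:disintfinal} yields the disintegration \eqref{E:disintfinal} into probabilities $\bar\mm_{\alpha,R}$ supported on geodesic segments $\bar X_{\alpha,R}$, with $(\bar X_{\alpha,R},\sfd,\bar\mm_{\alpha,R})$ an $\MCP(0,N)$ space of diameter $<D_R:=R+\diam(E)+1$, with $\bar\mm_{\alpha,R}(E)=\mm(E)/\mm(B_R)$ for $\bar\qq_R$-a.e.\ $\alpha$ and $\bar\qq_R(Q_R)=\mm(B_R)$. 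The first and main step is to transfer the outer Minkowski content to the fibers. On a geodesic segment $\sfd$ restricts to the arc-length metric, so for every $\eps>0$ the $\eps$-enlargement of $E\cap\bar X_{\alpha,R}$ taken inside $\bar X_{\alpha,R}$ is contained in $E^\eps\cap\bar X_{\alpha,R}$; together with $E\subset\bar{\mathcal{T}}_R$ and \eqref{E:disintfinal} this gives
\[
\mm(E^\eps)-\mm(E)\ \geq\ \int_{Q_R}\left(\bar\mm_{\alpha,R}\!\left((E\cap\bar X_{\alpha,R})^{\eps}\right)-\bar\mm_{\alpha,R}(E\cap\bar X_{\alpha,R})\right)\bar\qq_R(d\alpha),
\]
with $(\cdot)^\eps$ now the enlargement inside $\bar X_{\alpha,R}$. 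Dividing by $\eps_j$, with $(\eps_j)$ a sequence realizing $\mm^+(E)$, and invoking Fatou's lemma (the integrand is $\bar\qq_R$-measurable by the standard properties of the disintegration in Theorem~\ref{T:locMCP}), we obtain $\mm^+(E)\geq\int_{Q_R}\bar\mm_{\alpha,R}^{+}(E\cap\bar X_{\alpha,R})\,\bar\qq_R(d\alpha)$, where $\bar\mm_{\alpha,R}^{+}$ denotes the outer Minkowski content in $(\bar X_{\alpha,R},\sfd,\bar\mm_{\alpha,R})$.

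\emph{One-dimensional reduction and the scaling limit.} Applying Theorem~\ref{T:ISOMCP} in each one-dimensional space and then \eqref{E:expansion}, and using $\bar\mm_{\alpha,R}(E\cap\bar X_{\alpha,R})=\mm(E)/\mm(B_R)$ together with $\bar\qq_R(Q_R)=\mm(B_R)$,
\[
\mm^+(E)\ \geq\ \mm(B_R)\,\I^{\MCP}_{0,N,D_R}\!\left(\tfrac{\mm(E)}{\mm(B_R)}\right)\ \geq\ \frac{\mm(B_R)^{1/N}}{D_R}\,\mm(E)^{\frac{N-1}{N}}\left(N^{1/N}+o(1)\right),
\]
the $o(1)$ being as $R\to\infty$ (legitimate since $\mm(E)/\mm(B_R)\to0$, because $\AVR_X>0$). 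By the definition of the asymptotic volume ratio $\mm(B_R)=\omega_N\AVR_X\,R^N(1+o(1))$ and $D_R=R(1+o(1))$, hence $\mm(B_R)^{1/N}/D_R\to(\omega_N\AVR_X)^{1/N}$, and letting $R\to\infty$ yields \eqref{E:isopAVR} for bounded $E$ (the case $\mm(E)=0$ being trivial).

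\emph{Unbounded sets.} Let $E$ be Borel with $0<\mm(E)<\infty$ and $\mm^+(E)<\infty$ (otherwise there is nothing to prove); fix $x_0\in X$ and set $E_R:=E\cap B_R(x_0)$, so $\mm(E_R)\uparrow\mm(E)$. Any point of $E_R^\eps\setminus E_R$ lies in $E^\eps\setminus E$ or in $E\cap(B_{R+\eps}\setminus B_R)$, hence $\mm(E_R^\eps)-\mm(E_R)\leq(\mm(E^\eps)-\mm(E))+\mm(E\cap(B_{R+\eps}\setminus B_R))$; dividing by $\eps_j\downarrow0$ realizing $\mm^+(E)$ gives $\mm^+(E_R)\leq\mm^+(E)+\psi(R)$ with $\psi(R):=\liminf_j\eps_j^{-1}\mm(E\cap(B_{R+\eps_j}\setminus B_R))$. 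Since $\int_0^\infty\eps_j^{-1}\mm(E\cap(B_{R+\eps_j}\setminus B_R))\,dR\leq\mm(E)$ by Fubini, Fatou gives $\int_0^\infty\psi\,dR\leq\mm(E)<\infty$, so $\psi(R_n)\to0$ along some $R_n\to\infty$. Applying the bounded case to $E_{R_n}$ and letting $n\to\infty$ yields $\mm^+(E)\geq\liminf_n\mm^+(E_{R_n})\geq(N\omega_N\AVR_X)^{1/N}\mm(E)^{\frac{N-1}{N}}$, which concludes the argument.

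I expect the crux to be the first paragraph: passing from the ambient outer Minkowski content to the integral of the one-dimensional ones, via the inclusion of enlargements and the measurability/Fatou argument, is where the genuine work lies. The scaling limit $R\to\infty$ then reduces to elementary bookkeeping built only on the definition of $\AVR_X$, and the truncation in the last step is a standard device needed to reach full generality.
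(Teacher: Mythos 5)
Your proposal is correct, and in the bounded case it follows essentially the same route as the paper: disintegration via Proposition~\ref{P:disintfinal}, comparison of the ambient enlargement with the intrinsic one on each ray, Fatou, the one-dimensional bound of Theorem~\ref{T:ISOMCP} with a uniform diameter bound, the expansion \eqref{E:expansion}, and the scaling limit $R\to\infty$ using $\AVR_X>0$ (the paper passes through $\I^{\MCP}_{0,N,T_{\alpha,R}}\geq\I^{\MCP}_{0,N,R+\diam E}$ via \cite[Lemma 3.9]{CS19}, while you use the uniform bound $D_R$ directly; this is immaterial). One presentational caveat: in your first displayed inequality the integrand $\alpha\mapsto\bar\mm_{\alpha,R}\bigl((E\cap\bar X_{\alpha,R})^{\eps}\bigr)$ involves an $\alpha$-dependent set, whose measurability does not follow directly from the disintegration axioms (those give measurability of $\alpha\mapsto\bar\mm_{\alpha,R}(B)$ for a fixed Borel $B$); the standard fix, which is what the paper does, is to keep $\bar\mm_{\alpha,R}(E^{\eps})$ inside the integral, apply Fatou to those measurable functions, and only then bound the fibre-wise $\liminf$ from below by the intrinsic Minkowski content using your inclusion of enlargements — a harmless reordering.

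Where you genuinely depart from the paper is the passage from bounded to general $E$. The paper invokes the relaxation result of Ambrosio--Di Marino--Gigli, namely that the perimeter is the relaxation of the outer Minkowski content with approximating sets that can be taken bounded, and concludes via $Per(E)\leq\mm^{+}(E)$. You instead truncate, $E_R=E\cap B_R(x_0)$, estimate $\mm^{+}(E_R)\leq\mm^{+}(E)+\psi(R)$ by splitting $E_R^{\eps}\setminus E_R$ into $E^{\eps}\setminus E$ and the annular piece $E\cap(\bar B_{R+\eps}\setminus B_R)$ (note the closed outer ball; this does not affect the estimate), and then use Fubini/Fatou to show $\int_0^\infty\psi\,dR\leq\mm(E)$, hence $\psi(R_n)\to0$ along a sequence $R_n\to\infty$, which lets you pass the bounded-case inequality to the limit. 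This averaging-over-radii argument is self-contained and elementary: it avoids introducing the perimeter functional and the external relaxation theorem (and the ``inspection'' of its proof needed to ensure bounded approximants), at the cost of a slightly longer computation; the paper's route, conversely, yields as a by-product the same lower bound for the perimeter $Per(E)$, which is a marginally stronger conclusion since $Per(E)\leq\mm^{+}(E)$. Both arguments are complete and correct.
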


\begin{proof}

{\bf Step 1.} Assume $E$ to be a bounded set. \\ 
Let $x_{0} \in E$ be any point and  
 consider $R > 0$ such that $E \subset B_{R/4}(x_{0})$; 
Proposition~\ref{P:disintfinal} implies we have the following disintegration 
$$
\mm\llcorner_{\bar{\mathcal{T}}_{R}} 
= \int_{Q_{R}} \bar{\mm}_{\alpha,R}\, \bar{\qq}_{R}(d\alpha), \qquad 
\bar{\mm}_{\alpha,R}(\bar X_{\alpha,R}) = 1, \qquad \bar{\qq}_{R}(Q_{R}) = \mm(B_{R}), 
$$
%
%
%
with $E\subset \bar{\mathcal{T}}_{R}$ and for $\bar{\qq}_{R}$-a.e. $\alpha \in Q_{R}$, 
$\bar{\mm}_{\alpha,R}(E) = \frac{\mm(E)}{\mm(B_{R})}$. 
At this point we can compute the outer Minkowski content of $E$: 
\begin{align*}
\mm^{+}(E) 
= &~ \liminf_{\ve \to 0} \frac{\mm(E^{\ve}) - \mm(E)}{\ve} 
\geq \liminf_{\ve \to 0} \frac{\mm(E^{\ve}\cap \bar{\mathcal{T}}_{R}) - \mm(E)}{\ve} 
\\
\geq &~ \int_{Q_{R}}\liminf_{\ve \to 0}  
\frac{\bar{\mm}_{\alpha,R}(E^{\ve}) - \bar{\mm}_{\alpha,R}(E)}{\ve}\, \bar{\qq}_{R}(d\alpha) \\
\geq &~ \int_{Q_{R}} \bar{\mm}_{\alpha,R}^{+}(E)\, \bar{\qq}_{R}(d\alpha) \\
\geq &~ \int_{Q_{R}} \mathcal{I}^{\MCP}_{0,N,T_{\alpha,R}} 
\left(\mm(E)/\mm(B_{R})\right) 
\, \bar{\qq}_{R}(d\alpha),
\end{align*}
%
%
%
where the last inequality follows from $(X_{\alpha,R}, \sfd,\bar{\mm}_{\alpha,R})$ 
being an $\MCP(0,N)$ one-dimensional space, and $T_{\alpha,R} \leq R + \diam (E)$ was introduced to obtain \eqref{E:disintnormalized}.

By \cite[Lemma 3.9]{CS19} 
$\mathcal{I}^{\MCP}_{0,N,T_{\alpha,R}}(v) \geq \mathcal{I}^{\MCP}_{0,N,R +\diam(E)}(v)$, implying the following inequality: 
$$
\mm^{+}(E) \geq \mm(B_{R}) \mathcal{I}^{\MCP}_{0,N,R +\diam(E)}\left(\frac{\mm(E)}{\mm(B_{R})}\right).
$$
%
%
%
%
%
%
%
%
We continue the chain of inequalities by 
\eqref{E:expansion}:
\begin{align*}
\mm^{+}(E) 
\geq &~ \frac{\mm(B_{R})N^{\frac{1}{N}}}{R + \diam(E)}\left( 
 \left(\frac{\mm(E)}{\mm(B_{R})}\right)^{\frac{N-1}{N}}
+
o\left(\left(\frac{\mm(E)}{\mm(B_{R})}\right)^{\frac{N-1}{N}}\right)
\right).
\end{align*}
From the hypothesis of Euclidean volume growth ($\AVR_{X} > 0$), one infers 
that $R \sim \mm(B_{R})^{\frac{1}{N}}$ for large values of $R$, hence  
we can then take the limit as $R \to \infty$ to obtain 
$$
\mm^{+}(E) \geq 
 \left(N \omega_{N}\AVR_{X}\right)^{\frac{1}{N}} \mm(E)^{\frac{N-1}{N}},
$$
proving the claim, provided $E$ is bounded. \\

{\bf Step 2.} Any $E\subset X$. \\
The general case follows by a general relaxation principle investigated in 
\cite{AmbroGigDimar}. If $(X,\sfd,\mm)$ is any \mms\ (no curvature assumptions are needed), it is shown that (see Theorem~3.6) 
for any Borel set with $A \subset X$ with $\mm(A)<\infty$
it holds true 
\begin{equation}\label{E:MP}
\inf \left\{\liminf_{h\to 0} \mm^{+}(A_{h}) \colon A_{h} \to A \right\} = Per(A),
\end{equation}
where, by definition, $A_{h} \to A$ if $\mm(A_{h} \Delta A) \to 0$, and $Per$ 
is the perimeter defined in $(X,\sfd,\mm)$; for its definition we refer to 
\cite{AmbroGigDimar} and references therein.
Inspecting the proof of \cite[Theorem 3.6]{AmbroGigDimar}, in particular the ``$\leq$'' part, one notices that one can furthermore impose $A_{h}$ to be bounded 
and still the identification \eqref{E:MP} is valid.
Hence for any $E \subset X$ with $\mm(E) < \infty$ we deduce that 
$$
Per(E) \geq 
 \left(N \omega_{N}\AVR_{X}\right)^{\frac{1}{N}} \mm(E)^{\frac{N-1}{N}}.
$$
Since $Per(E) \leq \mm^{+}(E)$, the claim follows.
\end{proof}

\smallskip
\subsection{Sharp Inequality}
As one can expect from the sharpness of the isoperimetric inequality 
for compact $\MCP(0,N)$ spaces obtained in \cite{CS19}, 
also inequality~\eqref{E:isopAVR} is sharp. 
In particular, if we fix
$a,v >0$, $N>1$, we can find a $\MCP(0,N)$ space $(X,\sfd,\mm)$, with
$\AVR_X=a$, and a subset $E\subset X$ such that
$\mm(E)=v$ and
$\mm^+(E)=(N\omega_N\AVR_X)^{\frac{1}{N}}\mm(E)^{\frac{N-1}{N}}$.
Indeed, consider the one-dimensional space $([0,\infty),|\cdot|,h\L)$,
with
\begin{equation}
  h(x)=
  \begin{dcases}
    (N\omega_N a)^{\frac{1}{N}} v^{\frac{N-1}{N}}
    & \quad\text{ if }
    x\leq \left(\frac{v}{N\omega_N a}\right)^{\frac{1}{N}},\\
    N\omega_N ax^{N-1}
    & \quad\text{ if }
    x\geq \left(\frac{v}{N\omega_N a}\right)^{\frac{1}{N}}.
  \end{dcases}
\end{equation}
It is easy to check that $h$ satisfies~\eqref{E:MCPdef3} with $K=0$
and that $\AVR_{([0,\infty),|\cdot|,h\L)}=a$.
We take $E=[0,(\frac{v}{N\omega_N a})^{\frac{1}{N}}]$, and we
trivially have $(h\L)(E)=v$ and
\begin{equation}
(h\L)^+(E)=h((\frac{v}{N\omega_Na})^{\frac{1}{N}})=(N\omega_N
a)^{\frac{1}{N}} v^{\frac{N-1}{N}},  
\end{equation}
which corresponds to equality in inequality~\eqref{E:isopAVR}.
This easy observation concludes, together with Theorem \ref{T:isoperimetricAVR}, the proof of Theorem \ref{T:main1}.
%

%
%

\end{document}